\documentclass[12pt]{amsart}

\usepackage[margin=2.5cm]{geometry}
\usepackage[T1]{fontenc}
\usepackage[utf8]{inputenc}
\usepackage{times}
\usepackage{microtype}
\usepackage{amsmath,amssymb,amsthm,mathtools}
\usepackage[shortlabels]{enumitem}
\usepackage{tikz-cd}
\usepackage[
  backend=biber,
  style=alphabetic,
  maxnames=50,
  minalphanames=4,
  maxalphanames=4
]{biblatex}
\usepackage[
  colorlinks=false,
  pdfborder={0 0 0.8},
  linkbordercolor={0.9 0.05 0.05},
  citebordercolor={0.05 0.65 0.05},
  urlbordercolor={0 0.68 0.82},
  filebordercolor={0 0.68 0.82}
]{hyperref}

\newcommand\FF{\mathbb{F}}

\newcommand{\spec}{\operatorname{Spec}}

\newcommand{\cofib}{\operatorname{cofib}}

\renewcommand{\to}[1][]{\xrightarrow{\ #1\ }}

\title{Derived Kunz and Radu--Andr\'e Theorems}
\author{Daniel Fink}

\theoremstyle{plain}
\newtheorem{theorem}{Theorem}[section]

\newtheorem{lemma}[theorem]{Lemma}
\newtheorem{corollary}[theorem]{Corollary}

\theoremstyle{definition}

\begin{document}

\begin{abstract}
We extend Kunz's theorem \cite{Kun69} and its relative form, the Radu--Andr\'e theorem \cite{Rad92,And93}, to animated algebras. The main inputs are the classical theorems, together with Bhatt and Scholze's vanishing of the Frobenius on negative cohomology.
\end{abstract}

\maketitle
\thispagestyle{empty}

\section{Introduction}

Kunz's theorem characterizes regularity for noetherian rings in prime characteristic in terms of the Frobenius endomorphism. It says that a noetherian ring $R$ of prime characteristic $p$ is regular if and only if its absolute Frobenius
\[
  F_R:R\to R,\qquad r\mapsto r^p,
\]
is flat \cite{Kun69}. 

There is a relative form of this criterion, due to Radu and Andr\'e. If
$f:R\to S$ is a homomorphism of noetherian rings of characteristic $p$, then
$f$ is regular, that is, flat with geometrically regular fibers, if and only if
the ordinary relative Frobenius
\[
  F_{S/R,\heartsuit} : S\otimes_{R,F_R} R \to S,
  \qquad s\otimes r\mapsto s^p f(r),
\]
is flat \cite{Rad92,And93}. 

The purpose of this note is to show that these two results extend to animated rings. Besides the classical theorems, the main input is the result of Bhatt and Scholze that the Frobenius of an animated $\FF_p$-algebra induces the zero map on negative cohomology. The relative case is then proved using a fiberwise flatness criterion and induction on the cohomological truncation degree.

\subsection*{Acknowledgments}
I would like to thank Manuel Blickle for many helpful discussions and for his comments on an earlier version of this manuscript.

The author acknowledges support from the Deutsche Forschungsgemeinschaft (DFG, German Research Foundation) through the Collaborative Research Centre TRR~326 \textit{Geometry and Arithmetic of Uniformized Structures}, project number 444845124.

\subsection*{Declaration on computer assistance}
The proof of Lemma~\ref{lemma:key-lemma} was developed with computer assistance, specifically through discussions with ChatGPT and Claude.

\section{Derived Kunz and Radu--Andr\'e Theorems}

We recall the following notions. Let $R$ be an animated ring and let $M\in D(R)^{\le0}$ be a connective $R$-module. Then $M$ is called \emph{flat} if $H^0(M)$ is flat as an ordinary $H^0(R)$-module, and the natural map $H^k(R)\otimes_{H^0(R)}H^0(M)\to H^k(M)$ is an isomorphism for every $k\le -1$.

A map $f:R\to S$ of animated rings is called \emph{flat} if its target $S$ is flat as a module over its source $R$. We say that $f$ is \emph{regular} if it is flat and the induced map $H^0(R)\to H^0(S)$ of ordinary rings is regular \cite[\href{https://stacks.math.columbia.edu/tag/07BZ}{Tag 07BZ}]{stacks-project}.

We will use the following result that the Frobenius acts trivially on negative cohomology. The argument given here differs from the original one \cite[Proposition~11.6]{BS17}.

\begin{lemma}
\label{lemma:Frobenius-negative-cohomology-paper}
For any animated $\FF_p$-algebra $R$, the absolute Frobenius map $F_R$ induces the zero map on negative cohomology groups.
\end{lemma}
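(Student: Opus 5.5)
We reduce to the polynomial case and compute homotopy groups through the simplicial model. Write $R$ as the geometric realization of a simplicial commutative $\FF_p$-algebra $R_\bullet$ in which each $R_n$ is a polynomial $\FF_p$-algebra. Then $H^{-k}(R)=\pi_k(R_\bullet)$ is the $k$-th homotopy group of the underlying simplicial abelian group. The absolute Frobenius $F_R$ is induced levelwise by the ring maps $x\mapsto x^p$ on each $R_n$, since Frobenius on animated $\FF_p$-algebras is the left Kan extension of Frobenius on polynomial algebras. So it suffices to show the following: if $R_\bullet$ is a simplicial commutative $\FF_p$-algebra and $k\ge1$, then levelwise Frobenius kills $\pi_k(R_\bullet)$.

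Take a class in $\pi_k$. Represent it by an element $x\in N_k(R_\bullet)$, the normalized chains, so $d_i x=0$ for all $i$. We must show that $x^p$ is a boundary in the normalized (or Moore) complex. The key observation is that the product on homotopy of a simplicial commutative ring is given by the shuffle (Eilenberg--Zilber) product $\pi_a\otimes\pi_b\to\pi_{a+b}$, and that the levelwise product $x\cdot y$ of two elements of degree $k$ represents the image of $x\otimes y$ under the diagonal $\pi_k(R\otimes R)\to\pi_k(R)$. The cleanest route is to pass through $R_\bullet^{\otimes p}$. The levelwise $p$-th power is the composite of the diagonal $\Delta\colon R_\bullet\to (R_\bullet)^{\otimes p}$, $x\mapsto x\otimes\cdots\otimes x$ (not additive, so we argue differently), with multiplication.

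Instead we argue with the standard fact that for a simplicial commutative ring and $k\ge1$, the levelwise product of two cycles $x,y$ of degree $k$ is homologous to zero. By Eilenberg--Zilber, the levelwise product on $\pi_k$ corresponds, via the Alexander--Whitney map, to the cup-type product $\pi_k\otimes\pi_k\to\pi_k$ induced by the diagonal of $\Delta^k/\partial\Delta^k=S^k$. This diagonal $S^k\to S^k\wedge S^k$ is null-homotopic for $k\ge1$. Concretely, the Alexander--Whitney formula $AW(x\otimes y)=\sum_{i+j=k}\tilde d^{\,i}x\otimes d_0^{\,j}y$ applied to normalized cycles has only terms with a face applied to $x$ or $y$ unless $i$ or $j$ equals $k$. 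Those terms vanish, and the remaining ones land in degrees that do not contribute. Hence $xy$ is a boundary whenever $x,y\in N_k$ are cycles with $k\ge1$, and in particular $x^p=x\cdot x^{p-1}$ is a boundary.

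For the case $p\ge2$ this requires $x^{p-1}$ to be a cycle in degree $k$. It is a cycle because faces are ring maps and $d_i x=0$. So we apply the statement to the pair $x$ and $x^{p-1}$.

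\textbf{Main obstacle.} The hardest step is making the claim \emph{product of two degree-$k$ cycles is a boundary} precise, with an explicit null-homotopy. The first attempt would be the standard one: the pairing $\pi_k\times\pi_k\to\pi_k$ given by the levelwise product factors as
\[
  \pi_k(R)\times\pi_k(R)\to\pi_k(R\otimes R)\to\pi_k(R).
\]
The class $x\otimes y$ in $\pi_k(R\otimes R)$, with $R\otimes R$ taken levelwise, is the image of the external product under the diagonal $\pi_{2k}\leftarrow$ shuffle. Via Eilenberg--Zilber, $\pi_*(R_\bullet\otimes R_\bullet)$ in degree $k$ is a sum of $\pi_a\otimes\pi_b$ over $a+b=k$ plus Tor terms. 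The diagonal class of $x\otimes y$ corresponds to $AW(x\otimes y)$, whose components in $\pi_0\otimes\pi_k$ and $\pi_k\otimes\pi_0$ are $d_1\cdots d_k x\otimes y$ and $x\otimes d_0^k y$, both zero. This approach would need care with the Tor (Künneth) terms, which is where the hardest checking lies. A backup is the explicit simplicial homotopy: use the degeneracies $s_i x$ to write $x^p$ as $\partial$ of an explicit element in degree $k+1$, as in Bhatt--Scholze's or Quillen's argument.
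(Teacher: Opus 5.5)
Your proof is correct, and it takes a genuinely different route from the paper. The paper never chooses a simplicial model. It sets $S=\tau^{\ge k+1}R$ and uses the connectivity of the Hurewicz map $\cofib(R\to S)\otimes^L_R S\to L_{S/R}$ to identify $H^k(R)$ with $H^{k-1}(L_{S/R})$, compatibly with Frobenius. It then invokes the fact that Frobenius acts by zero on the cotangent complex, which is the derived form of $d(x^p)=px^{p-1}dx=0$, taken from \cite[Lemma~2.9]{Fin25}. You instead compute in a simplicial commutative model. You write levelwise Frobenius as $x\mapsto x\cdot x^{p-1}$, where $x^{p-1}$ is again a normalized cycle since $p\ge 2$. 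You then show that the levelwise product $\pi_k\times\pi_k\to\pi_k$ vanishes for $k\ge1$. Your route is elementary and self-contained, and it proves the stronger statement that levelwise products kill positive homotopy. The paper's route is model-independent and reduces everything to the trivial polynomial computation for differentials, at the cost of Lurie's Hurewicz connectivity estimate and the cotangent-complex lemma.

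Your justification of the key fact should be tidied, although nothing is actually missing. Take $x,y$ to be normalized cycles, so all their faces vanish. Then every Alexander--Whitney term $\tilde d^{\,i}x\otimes d_0^{\,j}y$ with $i+j=k\ge1$ involves at least one face of $x$ or of $y$. This includes the extreme terms $x\otimes d_0^{\,k}y$ and $\tilde d^{\,k}x\otimes y$, as you note yourself in your last paragraph. Hence $AW(x\otimes y)=0$ on the nose. Since $AW\colon N(R\otimes R)\to NR\otimes NR$ is a quasi-isomorphism by Eilenberg--Zilber, the cycle $x\otimes y$ is a boundary. Multiplication $\mu\colon R\otimes R\to R$ is a map of simplicial abelian groups, so $[xy]=\mu_*[x\otimes y]=0$.

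No K\"unneth or Tor analysis is needed. The sentence claiming the extreme terms ``land in degrees that do not contribute'' is wrong as reasoning, since $\pi_k\otimes\pi_0$ and $\pi_0\otimes\pi_k$ do contribute to $H_k$ in general; those terms vanish because they are faces. There is also a quicker Eckmann--Hilton argument. The map $\mu\colon R\times R\to R$ is a pointed map of Kan complexes that vanishes on $R\times 0$ and $0\times R$. For $k\ge1$ this gives $\pi_k(\mu)(a,b)=\pi_k(\mu)(a,0)+\pi_k(\mu)(0,b)=0$.
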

\begin{proof}
Let $k\le -1$. We show that $H^{k}(F_R)$ is the zero map. Let $S = \tau^{\ge k+1} R$ denote the cohomological $k+1$-truncation of $R$, and let $\varphi: R\to S$ be the natural truncation map. Let $\varepsilon_{\varphi}:\cofib(\varphi)\otimes^L_R S \to L_{S/R}$ denote the Hurewicz map \cite[\S 25.3.6]{Lur18}. The naturality square of the Frobenius endomorphisms associated with $\varphi$ induces a commutative diagram:
\[
\begin{tikzcd}
\cofib(\varphi) \arrow[r] \arrow[d] &  \cofib(\varphi)\otimes^L_R S \arrow[d] \arrow[r, "\varepsilon_{\varphi}"] & L_{S/R} \arrow[d] \\
\cofib(\varphi) \arrow[r]           & \cofib(\varphi)\otimes^L_R S \arrow[r, "\varepsilon_{\varphi}"]           & L_{S/R}\rlap{.}
\end{tikzcd}
\]
The vertical arrows are induced by the absolute Frobenius maps. The left-hand square is a diagram in $D(R)$ and the right-hand square in $D(S)$. The lower row carries the module structure induced from the upper row by restriction of scalars along the Frobenius. Thus, the right-hand vertical morphism factors as a composition
\[
L_{S/R} \longrightarrow L_{S/R}\otimes^L_{S,F} S \xrightarrow{0} L_{S/R},
\]
in which the second arrow is zero by the proof of \cite[Lemma~2.9]{Fin25}. This shows that the right-hand vertical arrow is zero. Now, applying $H^{k-1}$ to the outer square yields the right-hand square in the following commutative diagram of $H^0(R)$-modules:
\[
\begin{tikzcd}
H^k(R) \arrow[d, "H^k(F)"'] &
H^{k-1}(\cofib \varphi) \arrow[l, "\cong"'] \arrow[d] \arrow[r] &
H^{k-1}(L_{S/R}) \arrow[d, "0"] \\
H^k(R) &
H^{k-1}(\cofib \varphi) \arrow[l, "\cong"] \arrow[r] &
H^{k-1}(L_{S/R}) \rlap{.}
\end{tikzcd}
\]
By \cite[Remark~25.3.6.5]{Lur18}, the right-hand horizontal maps in this diagram are isomorphisms, and hence $H^k(F_R)=0$.
\end{proof}

The derived Kunz theorem follows immediately from this lemma and the classical Kunz theorem.

\begin{theorem}[Derived Kunz Theorem]
  \label{theorem:derived kunz}
  Let $R$ be an animated $\FF_p$-algebra, and suppose that $H^0(R)$ is noetherian. Then $R$ is discrete and regular if and only if its Frobenius endomorphism is flat.
\end{theorem}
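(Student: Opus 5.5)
We prove the two implications separately. Throughout, write $F=F_R:R\to R$ and regard the target as an $R$-module via $F$.

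For the forward direction, suppose $R$ is discrete and $H^0(R)$ is regular. Then $R\simeq H^0(R)$ is an ordinary noetherian regular $\FF_p$-algebra, and by Kunz's theorem \cite{Kun69} its Frobenius is flat as a map of ordinary rings. Because both $R$ and the target are discrete, the derived flatness condition reduces to two things. First, $H^0$ of the target must be flat over $H^0(R)$, which is exactly Kunz's statement. Second, for $k\le -1$ the maps $H^k(R)\otimes H^0(R)\to H^k(R)$ must be isomorphisms, and this holds trivially because both sides vanish. So $F$ is flat.

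For the converse, suppose $F$ is flat. By definition of flatness, $H^0$ of the target, namely $H^0(R)$ viewed through $H^0(F)=F_{H^0(R)}$, is flat over $H^0(R)$. So the Frobenius of the noetherian ring $H^0(R)$ is flat, and Kunz's theorem shows that $H^0(R)$ is regular. It remains to prove that $R$ is discrete, i.e.\ that $H^k(R)=0$ for every $k\le -1$. Flatness gives, for each $k\le -1$, an isomorphism
\[
H^k(R)\otimes_{H^0(R),F} H^0(R)\xrightarrow{\ \cong\ } H^k(R),
\]
where the source is base-changed along the Frobenius of $H^0(R)$ and the map is $x\otimes a\mapsto a\cdot H^k(F)(x)$. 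The key point is that this comparison map is induced by the $H^0(R)$-semilinear map $H^k(F)$, with $H^0(R)$-linear extension. By Lemma~\ref{lemma:Frobenius-negative-cohomology-paper}, $H^k(F)=0$, so the comparison map is zero. A zero map is an isomorphism only if its target vanishes, hence $H^k(R)=0$. This holds for all $k\le -1$, so $R$ is discrete, and together with the regularity of $H^0(R)$ this finishes the proof.

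The only real point of care is the identification in the second paragraph. We must check that the natural map $H^k(R)\otimes_{H^0(R)}H^0(M)\to H^k(M)$ in the definition of flatness, applied to $M=F_*R$, factors through $H^k(F)$. This follows because the map is induced by the unit $R\to F_*R$, which is $F$ itself, together with the $H^*(R)$-module structure. Once that is verified, the lemma does all the work. No completeness or noetherian hypotheses on the higher $H^k(R)$ are needed, because the vanishing argument is purely formal.
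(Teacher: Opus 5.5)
Your proposal is correct and follows the paper's argument. Kunz's theorem gives the forward direction. For the converse, the comparison map $H^k(R)\otimes_{H^0(R),F}H^0(R)\to H^k(R)$ from the definition of flatness factors through $H^k(F_R)$, which vanishes by Lemma~\ref{lemma:Frobenius-negative-cohomology-paper}; this forces $H^k(R)=0$ for all $k\le -1$, and Kunz's theorem then gives regularity. You only make explicit what the paper compresses into ``by the definition of flatness,'' and applying Kunz to $H^0(R)$ before rather than after proving discreteness makes no difference.
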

\begin{proof}
  The ``only if'' direction follows from Kunz's theorem. Conversely, if $F_R$ is flat, then $R$ is discrete by Lemma~\ref{lemma:Frobenius-negative-cohomology-paper} and the definition of flatness; see also \cite[Lemma~2.6]{Fin25}. If the discrete ring $R$ is noetherian, then Kunz's theorem implies that it is regular.
\end{proof}

Recall that $M$ is flat if and only if $M \otimes^L_R H^0(R)$ is a flat $H^0(R)$-module, in which case the latter canonically identifies with $H^0(M)$. Indeed, the only-if assertion follows from the fact that flatness is preserved under base change \cite[Proposition~7.2.2.16(1)]{Lur17}. Conversely, by \cite[Theorem~7.2.2.15]{Lur17}, the module $M$ is flat if and only if, for every discrete module $N\in D(R)^{\heartsuit}$, the derived tensor product $N\otimes^L_{R}M$ is discrete. By \cite[Proposition~7.1.1.13]{Lur17}, any discrete $R$-module is equivalent to a discrete $H^0(R)$-module, so we have an equivalence
\[
N\otimes^L_{R}M\simeq N\otimes^L_{H^0(R)}(M\otimes^L_{R}H^0(R)),
\]
which shows the discreteness of the left-hand side if $M \otimes^L_R H^0(R)$ is flat over $H^0(R)$.

This extends the flatness criterion of Avramov--Foxby
\cite[5.3.F. Proposition]{AF91}, in the form stated in
\cite[Lemma~9.5.2]{BLM21}, to the animated setting. This extension was
already established in \cite[Lemma~3.7.4]{Lur04}, where it was used to prove
a derived Popescu theorem. It will also be used in the proof of the derived
Radu--Andr\'e theorem.

\begin{lemma}
  \label{lemma:avramov-foxby}
  Let $R$ be an animated ring, and suppose that $H^0(R)$ is noetherian. Let $M\in D(R)^{\le 0}$ be a connective $R$-module. Then $M$ is flat if and only if $M\otimes^L_R \kappa(x)$ is discrete for every $x\in \spec(H^0(R))$.
\end{lemma}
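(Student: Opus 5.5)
The plan is to reduce to the classical Avramov--Foxby criterion over the noetherian ring $A:=H^0(R)$ by base change. Set $M_0:=M\otimes^L_R A$, a connective $A$-module. Since $\kappa(x)$ is an $A$-algebra, we have
\[
M\otimes^L_R\kappa(x)\simeq M_0\otimes^L_A\kappa(x).
\]
By the remark preceding the lemma, $M$ is flat over $R$ if and only if $M_0$ is a flat $A$-module, which means that $M_0$ is discrete and $H^0(M_0)$ is flat over $A$. So the lemma reduces to the following statement for ordinary noetherian $A$: a connective complex $M_0\in D(A)^{\le0}$ is flat (discrete with flat $H^0$) if and only if $M_0\otimes^L_A\kappa(x)$ is discrete for all $x\in\spec A$.

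The ``only if'' direction is immediate: if $M_0$ is a flat discrete module, then $M_0\otimes^L_A\kappa(x)=H^0(M_0)\otimes_A\kappa(x)$ is discrete.

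For the converse, the idea is to invoke the classical form \cite[Lemma~9.5.2]{BLM21} of \cite[5.3.F. Proposition]{AF91}. This says that for a noetherian ring $A$ and a complex $M_0\in D(A)^{\le0}$, if $M_0\otimes^L_A\kappa(x)$ has cohomology only in degree $0$ for all primes $x$, then $M_0$ is isomorphic to a flat module placed in degree $0$. If I had to argue it directly instead, I would first localize at a prime, since flatness and discreteness are local and localization commutes with the fiber tensors. Then I would pass to the derived $\mathfrak m$-adic completion and use a derived Nakayama argument. Let $n\le 0$ be the least degree with $H^n(M_0)\neq 0$ after localizing at a prime minimal with this property. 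The top-degree identification $H^0(M_0\otimes^L\kappa)=H^0(M_0)\otimes\kappa$, together with a spectral sequence or induction on $\dim A$ via the fibers at smaller primes, forces the vanishing of negative cohomology and of the $\mathrm{Tor}_1$ of $H^0(M_0)$ against the residue fields. Flatness of $H^0(M_0)$ then follows from the local flatness criterion. This direct route is the main obstacle: unbounded-below connective complexes need not be finitely generated, so Nakayama does not apply naively. That is exactly why I prefer to cite the Avramov--Foxby result, which handles arbitrary complexes in $D^{\le0}$ over noetherian rings using the depth/width formalism.

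Finally, I would conclude as follows. $M_0$ is flat over $A$, hence by the remark before the lemma $M$ is flat over $R$, with $H^0(M)\cong M_0$. This argument matches \cite[Lemma~3.7.4]{Lur04}.
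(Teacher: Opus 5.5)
Your proposal is correct and is essentially the paper's argument. You use the remark before the lemma to reduce flatness of $M$ to flatness of $M\otimes^L_R H^0(R)$ over the noetherian ring $H^0(R)$, observe that $M\otimes^L_R\kappa(x)\simeq (M\otimes^L_R H^0(R))\otimes^L_{H^0(R)}\kappa(x)$, and conclude by the classical Avramov--Foxby criterion in the form of \cite[Lemma~9.5.2]{BLM21}, as in \cite[Lemma~3.7.4]{Lur04}. The sketched ``direct'' route is unnecessary and only heuristic as written; for instance, a least nonvanishing cohomological degree need not exist for a connective complex that is unbounded below. You can simply drop it.
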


We will also need the following lemma. We say that an animated ring $R$ is cohomologically \emph{$n$-truncated}, for some $n\le 0$, if $H^k(R)=0$ for all $k\le n-1$. 

\begin{lemma}
  \label{lemma:key-lemma}
  Let $n\le -1$, and let $f\colon R\to S$ be a map of cohomologically $n$-truncated animated $\FF_p$-algebras. If $\tau^{\ge n+1}f$ and $\tau^{\ge n}F_{S/R}$ are flat, then $f$ is flat.
\end{lemma}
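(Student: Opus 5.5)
Throughout, $R$ and $S$ have cohomology only in degrees $n,\dots,0$. Flatness of $f$ means two things: $H^0(S)$ is flat over $H^0(R)$, and the comparison maps
\[
\alpha_k\colon H^k(R)\otimes_{H^0(R)}H^0(S)\to H^k(S)
\]
are isomorphisms for $k\le -1$. The hypothesis that $\tau^{\ge n+1}f$ is flat already gives flatness of $H^0(S)$ and that $\alpha_k$ is an isomorphism for $n+1\le k\le -1$. So the whole content is to show that $\alpha_n$ is an isomorphism. The plan is to measure the failure of $\alpha_n$ by its kernel $K$ and cokernel $Q$, and then show $K=Q=0$ using the relative Frobenius together with Lemma~\ref{lemma:Frobenius-negative-cohomology-paper}.

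\textbf{Step 1: describe $H^n$ of a base change.} Let $g\colon R\to R'$ be any map of $n$-truncated animated rings and put $S'=S\otimes^L_R R'$. Using the fiber sequence $H^n(R)[-n]\to R\to\tau^{\ge n+1}R$ and flatness of $\tau^{\ge n+1}S$ over $\tau^{\ge n+1}R$, I compute $H^n(S')$ through degree $n$, for instance via the Tor spectral sequence, in which only the terms $H^0$ and $H^n$ of $R$ and $S$ matter in the relevant range. I expect a natural exact sequence
\[
H^n(R')\otimes_{H^0(R)}H^0(S)\to H^n(S')\to Q\otimes_{H^0(R)}H^0(R')\to 0,
\]
together with a description of the kernel of the first map in terms of $K$, compatibly with base change. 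Since $H^0(S)$ is flat, this is essentially the statement that ``$\alpha_n$ commutes with base change up to its kernel and cokernel.''

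\textbf{Step 2: apply this to the Frobenius.} Take $g=F_R$ and $S'=S\otimes^L_{R,F_R}R$. The absolute Frobenius factors as $F_S=F_{S/R}\circ u$, where $u\colon S\to S'$ is the base change of $F_R$. By Lemma~\ref{lemma:Frobenius-negative-cohomology-paper}, $H^n(F_S)=0$ and $H^n(F_R)=0$. Flatness of $\tau^{\ge n}F_{S/R}$ gives an isomorphism
\[
H^n(S)\cong H^n(S')\otimes_{H^0(S')}H^0(S).
\]
I then chase the diagram formed by the two comparison maps (for $f$ and for $S'\to S$) and the maps $u$ and $F_{S/R}$ on $H^n$.

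In this chase, the contribution of $H^n(R)$ to $H^n(S')$ is hit through $F_R$, hence maps to zero in $H^n(S)$. Flatness of $F_{S/R}$ then forces the image of $\alpha_n$ to be controlled by the Frobenius twist $Q\otimes_{H^0(R),F}H^0(R)$. This twist is, in turn, a quotient of $H^n(S)$ via $u$, and $H^n(F_S)=0$ kills it. Running the chase should show first that $Q\otimes_{H^0(R),F}H^0(S)=0$, hence $Q=0$ after using faithfulness of Frobenius on supports. A similar argument with the kernel term, or a reduction to $Q=0$ using the exact sequence of Step 1 applied twice, should then give $K=0$.

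\textbf{Main obstacle.} The hardest part will be Step 1: getting the exact sequence for $H^n(S')$ precisely and naturally. In particular, I must track where $K$ enters, since it appears as a $\mathrm{Tor}$-type correction, and I must check that all maps in the Step 2 chase are the ones induced by $F_{S/R}$ and $u$. I would first try to avoid $K$ altogether. The idea is to replace $S$ by the cofiber of the map $H^n(R)\otimes H^0(S)[-n]\to S$ and work with the resulting cofiber sequence in $D(S)$. Lemma~\ref{lemma:avramov-foxby} is an alternative: reduce to checking that $S\otimes^L_R\kappa(x)$ is discrete. This would let me work over residue fields, where the twists by Frobenius are simply field extensions and faithfulness is automatic.
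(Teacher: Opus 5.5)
Your reduction to $\alpha_n$ is right, and your Step~1 sequence is correct: tensor $H^n(R')[-n]\to R'\to\tau^{\ge n+1}R'$ with $S$ over $R$, and use flatness of $\tau^{\ge n+1}S$ over $\tau^{\ge n+1}R$. The chase in Step~2, however, rests on a false claim. Write $S'=S\otimes^L_{R,F}R$ and $\iota_2\colon R\to S'$ for the second factor map. The image of $H^n(R')\otimes_{H^0(R)}H^0(S)$ in $H^n(S')$ is spanned over $H^0(S')$ by $H^n(\iota_2)(H^n(R))$. Since $F_{S/R}\circ\iota_2=f$, it is sent to $H^n(f)(H^n(R))$, which is exactly the image of $\alpha_n$, so it does not die. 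What vanishes is $H^n(u)\circ H^n(f)=H^n(\iota_2)\circ H^n(F_R)$. If both pieces of $H^n(S')$ died in $H^n(S)$, you would get $H^n(S)=0$, which is false already for $f=\mathrm{id}_R$.

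Read correctly, your ingredients do give surjectivity. The image of $H^n(u)(H^n(S))$ generates the quotient $Q\otimes_{H^0(R),F}H^0(R)$ over $H^0(S')$, and $H^n(F_{S/R})$ kills $H^n(u)(H^n(S))$ because $H^n(F_S)=0$. Since $H^n(S')\otimes_{H^0(S')}H^0(S)\to H^n(S)$ is onto, $H^n(S)$ is generated by $H^n(f)(H^n(R))$. Thus $Q=0$ follows directly, with no faithfulness of Frobenius needed.

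The genuine gap is injectivity. ``A similar argument with the kernel term'' cannot work, because that mechanism only shows that certain elements become zero, and this never proves that a kernel vanishes. You need the map $H^0(S)\otimes_{H^0(R),F}H^n(R)\to H^n(S')$ to be injective, and the proposal gives no reason why it should be. Here is the missing point. Since $H^{n-1}(S)=0$, the long exact sequence gives $H^{n-1}(S\otimes^L_R\tau^{\ge n+1}R)\cong K$. Once $Q=0$, naturality of the connecting maps in $g$ shows that the kernel of the first map in your Step~1 sequence is the image of $K\otimes_{H^0(R)}H^0(R')$ under the $H^0(R')$-linear extension of
\[
K\hookrightarrow H^n(R)\otimes_{H^0(R)}H^0(S)\xrightarrow{H^n(g)\otimes 1}H^n(R')\otimes_{H^0(R)}H^0(S).
\]
For $g=F_R$ this map vanishes by Lemma~\ref{lemma:Frobenius-negative-cohomology-paper}. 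Hence $H^n(S')\cong H^0(S)\otimes_{H^0(R),F}H^n(R)$. Applying $-\otimes_{H^0(S')}H^0(S)$, which recovers $H^n(S)$ by flatness of $\tau^{\ge n}F_{S/R}$, turns this isomorphism into $\alpha_n$.

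This step is the counterpart of the paper's key identity $F_*H^n(A)\cong H^n(\tau^{\ge n+1}A\otimes^L_{A,F}A)$. There too, Frobenius vanishing kills a map in a long exact sequence and so upgrades a surjection to an isomorphism. The paper applies it to $A=S$ and $A=R$, and via $F_S=F_{S/R}\circ u$ writes down the inverse of $\alpha_n$ directly, never introducing $K$ or $Q$. Finally, your fallback via Lemma~\ref{lemma:avramov-foxby} is not available, since the lemma does not assume $H^0(R)$ noetherian.
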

\begin{proof}
First, for a cohomologically $n$-truncated animated $\FF_p$-algebra $A$, consider the fiber sequence
$
H^n(A)[n]\to A \to \tau^{\ge n+1} A
$
in $D(A)$. Base change along the first factor map $A\to A\otimes^L_{A,F}A$ yields a fiber sequence
\[
(H^n(A)\otimes^L_{A,F}A)[n]\to
A\otimes^L_{A,F}A \to
(\tau^{\ge n+1}A)\otimes^L_{A,F}A
\]
in $D(A\otimes^L_{A,F}A)$. Since $-\otimes^L_{A,F}A$ is right $t$-exact for the standard $t$-structures, this fiber sequence induces an exact sequence of $H^0(A\otimes^L_{A,F}A)$-modules:
\[
H^n(A)\otimes_{H^0(A),F}H^0(A)
\to H^n(A\otimes^L_{A,F}A)
\to H^n((\tau^{\ge n+1}A)\otimes^L_{A,F}A)\to 0.
\]
The first map in this exact sequence is zero. To see this, apply $H^n$ to the following commutative diagram and use Lemma~\ref{lemma:Frobenius-negative-cohomology-paper} together with the $H^0(A)$-linearity of the second factor map:
\[
\begin{tikzcd}
H^n(A) \arrow[d] \arrow[r]              & A \arrow[d] \arrow[rd, "F"]   &    \\
{(H^n(A)\otimes^L_{A,F}A)[n]} \arrow[r] & {A\otimes^L_{A,F}A} \arrow[r, "\simeq"] & {A\rlap{.}}
\end{tikzcd}
\]
Thus, the second map in the exact sequence is an isomorphism. As the second factor map $\iota_2:A \to A\otimes_{A,F}^L A$ is an equivalence, we obtain isomorphisms
\[
  F_*H^n(A)
  \to[\cong]
  H^n(A\otimes^L_{A,F}A)
  \to[\cong]
  H^n(\tau^{\ge n+1}A\otimes^L_{A,F}A).
  \]
Under the above equivalence $A \simeq A\otimes^L_{A,F} A$, the $H^0(A)$-module structure on $H^n(A)$ induced by the right tensor factor is the usual one, while the one induced by the left tensor factor is obtained by restriction along the Frobenius endomorphism.

Now, consider the map $f:R \to S$. As $\tau^{\ge n+1}f$ is flat by assumption, it suffices to show that the natural map 
\[
  c_S:H^n(R)\otimes_{H^0(R)}H^0(S) \to H^n(S)
  \]
  is an isomorphism. The above argument for $A=S$ yields an isomorphism
\[
  H^n(\iota_2):H^n(S)\to[\cong] H^n(\tau^{\ge n+1}S\otimes^L_{S,F}S)
\]
of $H^0(S)$-modules, with respect to the second factor map. Moreover, we have the following chain of isomorphisms:
  \[
  \begin{aligned}
    H^n(\tau^{\ge n+1}S\otimes^L_{S,F}S)
    &\cong H^n(\tau^{\ge n+1}S\otimes^L_{S}S\otimes^L_{R,F}R \otimes^L_{S\otimes^L_{R,F}R}S)\\
    &\xleftarrow{\cong} H^n(\tau^{\ge n+1}S\otimes^L_{S}S\otimes^L_{R,F}R) \otimes_{H^0(S\otimes^L_{R,F}R)}H^0(S)\\
    &\cong H^n(\tau^{\ge n+1}S\otimes^L_{\tau^{\ge n+1}R}\tau^{\ge n+1}R\otimes^L_{R,F}R) \otimes_{H^0(S\otimes^L_{R,F}R)}H^0(S)\\
    &\xleftarrow{\cong} H^0(S)\otimes_{H^0(R)}H^n(\tau^{\ge n+1}R\otimes^L_{R,F}R) \otimes_{H^0(S\otimes^L_{R,F}R)}H^0(S)\\
    &\xleftarrow{\cong}  H^0(S)\otimes_{H^0(R),F}H^0(R)\otimes_{H^0(R)} H^n(R) \otimes_{H^0(S\otimes^L_{R,F}R)}H^0(S)\\
    &\cong H^n(R)\otimes_{H^0(R)}H^0(S).
  \end{aligned}
  \]
  The first isomorphism holds because the absolute Frobenius $F_S$ factors as a composition
  $S\to S\otimes^L_{R,F}R \to[F_{S/R}] S$; the second because $\tau^{\ge n}F_{S/R}$ is flat by assumption; the third because the composite $R \to S \to \tau^{\ge n+1} S$ is canonically equivalent to $R \to \tau^{\ge n+1}R \to \tau^{\ge n+1} S$; the fourth because $\tau^{\ge n+1} R \to \tau^{\ge n+1} S$ is flat by assumption; the fifth by the above with $A=R$, using the module structure induced by the first factor map; and the last because the action of $H^0(S\otimes^L_{R,F}R)$ on
  $H^0(S)\otimes_{H^0(R),F}H^0(R)\otimes_{H^0(R)} H^n(R)$
  is through the first two tensor factors via the canonical isomorphism
  $H^0(S\otimes^L_{R,F}R)\cong H^0(S)\otimes_{H^0(R),F}H^0(R)$, and because the map $R\to S$ factors through $F_{S/R}$. In total, we obtain a natural isomorphism of $H^0(S)$-modules
  \[
  \theta_S: H^n(S)\to[\cong] H^n(R)\otimes_{H^0(R)}H^0(S).
  \]
  It remains to show the identity $\theta_{S}^{-1}=c_S$ where $c_S$ is the natural map from above. As $\theta_{S}^{-1}$ is $H^0(S)$-linear, it suffices to show that $\theta_{S}^{-1}(x\otimes 1)=H^n(f)(x)$ for every $x\in H^n(R)$. Tracing back the above chain of equivalences, the element $x\otimes 1$ is sent to $H^n(\iota_2)(H^n(f)(x))$ which is precisely the image of $H^n(f)(x)$ under the isomorphism above with $A=S$. This finishes the proof.
\end{proof}

For $n=0$, the equivalence of (a) and (c) in the following theorem is precisely the classical Radu--Andr\'e theorem.

\begin{theorem}[Derived Radu--Andr\'e Theorem]
  \label{theorem:derived Radu-Andre}
  Let $f:R\to S$ be a map of animated $\FF_p$-algebras, and suppose that $H^0(R)$ and $H^0(S)$ are noetherian rings. Let $n\le 0$. Consider the following statements.
  \begin{enumerate}[(a)]
    \item The map $f$ is regular.
    \item The relative Frobenius map $F_{S/R}$ is flat.
    \item The cohomologically $n$-truncated relative Frobenius map $\tau^{\ge n} F_{S/R}$ is flat.
  \end{enumerate}
  Then (a) and (b) are equivalent and imply (c). Moreover, if $R$ and $S$ are cohomologically $n$-truncated, then~(c) implies~(a).
\end{theorem}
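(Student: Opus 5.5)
The plan is to compare everything with the discrete situation through base change to $H^0$, and to use Lemma~\ref{lemma:avramov-foxby} together with the classical Radu--Andr\'e theorem.

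\emph{(a)$\Rightarrow$(b).} Assume $f$ is flat and $H^0(R)\to H^0(S)$ is regular. Flatness of $f$ gives $S\otimes^L_R H^0(R)\simeq H^0(S)$. Base changing $F_{S/R}\colon S\otimes^L_{R,F}R\to S$ along $R\to H^0(R)$ (through the second factor) should recover the classical relative Frobenius $F_{H^0(S)/H^0(R),\heartsuit}$. By the classical Radu--Andr\'e theorem this map is flat. To conclude that $F_{S/R}$ itself is flat, I would use the criterion recalled before Lemma~\ref{lemma:avramov-foxby}: it suffices that $S\otimes^L_{S\otimes^L_{R,F}R}H^0(S\otimes^L_{R,F}R)$ is flat over $H^0$. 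Since $S\otimes^L_{R,F}R$ is flat over $R$ (as a base change of $f$), we have $H^0(S\otimes^L_{R,F}R)\simeq (S\otimes^L_{R,F}R)\otimes^L_R H^0(R)$. Hence this derived tensor product equals $S\otimes^L_R H^0(R)=H^0(S)$, regarded as a module over $H^0(S)\otimes_{H^0(R),F}H^0(R)$, and that module is flat.

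\emph{(b)$\Rightarrow$(a).} Suppose $F_{S/R}$ is flat. Its truncations $\tau^{\ge m}F_{S/R}$ are then flat as well. I would show by induction on $m\le 0$ that $\tau^{\ge m}f$ is flat. For $m=0$, flatness of $\tau^{\ge 0}F_{S/R}$ is the classical flatness of $F_{H^0(S)/H^0(R),\heartsuit}$, since $H^0(S\otimes^L_{R,F}R)=H^0(S)\otimes_{H^0(R),F}H^0(R)$. The classical Radu--Andr\'e theorem then makes $H^0(R)\to H^0(S)$ regular, and in particular flat. For the inductive step, apply Lemma~\ref{lemma:key-lemma} with $n=m-1$ to $\tau^{\ge m-1}f$. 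The needed input $\tau^{\ge m-1}F_{\tau^{\ge m-1}S/\tau^{\ge m-1}R}$ must be identified with $\tau^{\ge m-1}F_{S/R}$. Truncation commutes with these tensor products in degrees $\ge m-1$ because the inputs are connective, so the cohomology in degrees $\ge m-1$ depends only on the $\tau^{\ge m-1}$ truncations of $R$ and $S$. Flatness of $f$ is a condition on each $H^k$, so flatness of all $\tau^{\ge m}f$ gives flatness of $f$. Together with regularity on $H^0$, this is (a).

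\emph{(b)$\Rightarrow$(c)} is immediate, because truncations of flat maps are flat in the sense of the definition. This is again a degreewise condition, with $H^0$ of the source unchanged.

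\emph{(c)$\Rightarrow$(a) when $R,S$ are $n$-truncated.} Run the same induction up to $m=n$. At each step $m\ge n$, the hypothesis $\tau^{\ge m}F_{S/R}$ is a truncation of $\tau^{\ge n}F_{S/R}$ and is therefore flat. Lemma~\ref{lemma:key-lemma} with index $m-1\ge n$ applies to the $(m-1)$-truncated maps. This gives flatness of $\tau^{\ge n}f=f$, and regularity on $H^0$ comes from the base case.

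The main obstacle is the bookkeeping that identifies truncations of the relative Frobenius of $f$ with the relative Frobenius of the truncated maps. This means checking that $\tau^{\ge k}(S\otimes^L_{R,F}R)$ depends only on $\tau^{\ge k}R$ and $\tau^{\ge k}S$. It also means checking that a flat map of non-truncated rings restricts to a flat map on truncations. I would handle both through the Tor spectral sequence in low degrees.
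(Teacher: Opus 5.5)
Your proposal is correct. Three of the four implications follow the paper's proof in substance. In (a)$\Rightarrow$(b) you route the reduction to $H^0$ through the base-change criterion for flatness instead of checking $H^k(S\otimes^L_{R,F}R)\otimes H^0(S)\cong H^k(S)$ directly. The point is the same: source and target of $F_{S/R}$ are flat over $R$, so only the classical relative Frobenius matters. (b)$\Rightarrow$(c) is the same immediate observation. Your top-down induction on $m$ for (c)$\Rightarrow$(a) is the paper's induction on $n$. Both bookkeeping points you flag are routine. The identification $\tau^{\ge m}F_{S/R}\simeq\tau^{\ge m}F_{\tau^{\ge m}S/\tau^{\ge m}R}$ is exactly what the paper uses, and flatness of truncations of flat maps is immediate from the degreewise definition, so no spectral sequence is needed.

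The genuine difference is (b)$\Rightarrow$(a). The paper passes to the fibers $S\otimes^L_R\kappa$ for finite extensions $\kappa/\kappa(x)$. There $F_{S/R}\otimes^L_R\kappa$ is flat, so the absolute Frobenius of the fiber is flat. Theorem~\ref{theorem:derived kunz} then makes the fiber discrete and regular, and Lemma~\ref{lemma:avramov-foxby} converts discreteness of the fibers into flatness of $f$. You instead push flatness of $F_{S/R}$ to all truncations and run Lemma~\ref{lemma:key-lemma} through every $m\le 0$. You then conclude because flatness of $f$ is checked degreewise. In effect, (b)$\Rightarrow$(a) becomes the ``moreover'' part applied to each $\tau^{\ge m}f$.

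Your route buys economy and some generality. It dispenses with Theorem~\ref{theorem:derived kunz} and Lemma~\ref{lemma:avramov-foxby} entirely. It also shows that flatness of $F_{S/R}$ together with flatness of $H^0(R)\to H^0(S)$ forces flatness of $f$ with no noetherian hypothesis. Noetherianity enters only through the classical Radu--Andr\'e theorem at the base of the induction. The cost is that starting the induction needs the classical theorem, and hence both $H^0(R)$ and $H^0(S)$ noetherian. The fiberwise argument gets flatness of $f$ assuming only $H^0(R)$ noetherian, because discreteness of the fibers uses only the vanishing of Frobenius on negative cohomology. That is exactly what the paper needs for the corollary following the theorem.
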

\begin{proof}
  We first show the equivalence of (a) and (b). Assume that $f$ is regular. Then $f$ is flat, and hence so is its base change $R\to S\otimes^L_{R,F}R$. Thus, $F_{S/R}$ is a map of flat $R$-algebras, and, for $k\le -1$, the canonical maps are isomorphisms:
  \[
  H^k(S\otimes^L_{R,F}R)\otimes_{H^0(S\otimes^L_{R,F}R)}H^0(S)\cong H^k(R)\otimes_{H^0(R)}H^0(S\otimes^L_{R,F}R)\otimes_{H^0(S\otimes^L_{R,F}R)}H^0(S) \cong H^k(S).
  \]
  Moreover, the map $H^0(R)\to H^0(S)$ of ordinary noetherian rings is regular and thus $H^0(F_{S/R})=F_{H^0(S)/H^0(R),\heartsuit}$ is flat by the Radu--Andr\'e theorem. This proves (b). Conversely, suppose that $F_{S/R}$ is flat. Let $x\in \spec(H^0(R))$, and let $\kappa/\kappa(x)$ be a finite field extension of the residue field at $x$. Then $H^0(S\otimes^L_R \kappa)$ is noetherian. Since
  \[
    F_{S\otimes^L_R \kappa/\kappa}\simeq F_{S/R}\otimes^L_R \kappa
  \]
  is flat, the absolute Frobenius $F_{S\otimes^L_R \kappa}$ is flat as the composite of flat maps
  \[
    S\otimes^L_R \kappa
    \to[\phantom{F_{S\otimes^L_R \kappa/\kappa}}]
    (S\otimes^L_R \kappa)\otimes^L_{\kappa,F}\kappa
    \to[F_{S\otimes^L_R \kappa/\kappa}]
    S\otimes^L_R \kappa.
  \]
  Thus $S\otimes^L_R \kappa$ is discrete and regular by Theorem~\ref{theorem:derived kunz}. Taking $\kappa=\kappa(x)$ shows that $f$ is flat by Lemma~\ref{lemma:avramov-foxby}, and hence regular.

  It is clear that (b) implies (c). We next assume that $R$ and $S$ are cohomologically $n$-truncated and that $\tau^{\ge n} F_{S/R}$ is flat. We prove (a) by induction on $n$, with the base case $n=0$ being the Radu--Andr\'e theorem. Suppose that $n\le -1$. Since $\tau^{\ge n} F_{S/R}$ is flat, its cohomological $(n+1)$-truncation
  \[
  \tau^{\ge n+1} F_{S/R} \simeq \tau^{\ge n+1} F_{\tau^{\ge n+1}S/\tau^{\ge n+1}R}
  \]
  is also flat, and thus $\tau^{\ge n+1}f$ is regular by the inductive hypothesis. It remains to show that $f$ is flat. This follows from Lemma~\ref{lemma:key-lemma}, since $\tau^{\ge n} F_{S/R}$ and $\tau^{\ge n+1}f$ are flat.
\end{proof}

  The same argument as in the proof of the implication \textup{(b)} $\Rightarrow$ \textup{(a)} in Theorem~\ref{theorem:derived Radu-Andre} shows the following statement.

\begin{corollary}
  Let $f:R\to S$ be a map of animated $\FF_p$-algebras, and suppose that $H^0(R)$ is noetherian. If $F_{S/R}$ is flat, then $f$ is flat and all its geometric fibers have a flat absolute Frobenius.
\end{corollary}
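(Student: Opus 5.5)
The plan is to repeat the argument for the implication (b) $\Rightarrow$ (a) in Theorem~\ref{theorem:derived Radu-Andre}, but to stop before any appeal to Kunz's theorem, since now only $H^0(R)$ is assumed noetherian. Fix $x\in\spec(H^0(R))$ and an algebraic closure $\bar\kappa$ of $\kappa(x)$; the geometric fiber of $f$ over $x$ is then $S\otimes^L_R\bar\kappa$. Flatness is preserved under base change \cite[Proposition~7.2.2.16(1)]{Lur17}, and relative Frobenius is compatible with base change along $R\to\bar\kappa$:
\[
F_{(S\otimes^L_R\bar\kappa)/\bar\kappa}\simeq F_{S/R}\otimes^L_R\bar\kappa .
\]
Hence the relative Frobenius of the geometric fiber over $\bar\kappa$ is flat.

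Next I factor the absolute Frobenius of $S\otimes^L_R\bar\kappa$ as
\[
S\otimes^L_R\bar\kappa\to(S\otimes^L_R\bar\kappa)\otimes^L_{\bar\kappa,F}\bar\kappa\to[F_{(S\otimes^L_R\bar\kappa)/\bar\kappa}] S\otimes^L_R\bar\kappa.
\]
The first map is the base change of $F_{\bar\kappa}\colon\bar\kappa\to\bar\kappa$. This map is flat because it is a map of fields; in fact it is an isomorphism because $\bar\kappa$ is perfect. A composite of flat maps is flat, so the absolute Frobenius of every geometric fiber is flat. This gives the second assertion of the statement.

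For flatness of $f$, I would use Lemma~\ref{lemma:avramov-foxby}, which requires only that $H^0(R)$ be noetherian. So it suffices to show that $S\otimes^L_R\kappa(x)$ is discrete for every $x$. Discreteness can be checked after the faithfully flat field extension $\kappa(x)\to\bar\kappa$, since cohomology commutes with this base change. So it suffices that $S\otimes^L_R\bar\kappa$ is discrete. This follows from the flatness of its absolute Frobenius together with Lemma~\ref{lemma:Frobenius-negative-cohomology-paper}, exactly as in the first half of the proof of Theorem~\ref{theorem:derived kunz}, and that half used no noetherian hypothesis.

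The point needing care is that step. If $F_A$ is flat, then $H^k(A)\otimes_{H^0(A),F}H^0(A)\to H^k(A)$ is an isomorphism for $k\le-1$. This map factors through $H^k(F_A)$, which vanishes by Lemma~\ref{lemma:Frobenius-negative-cohomology-paper}, so $H^k(A)=0$; this is \cite[Lemma~2.6]{Fin25}. Otherwise the argument is a formal reuse of the earlier proof, the only change being that Kunz's theorem is never invoked.
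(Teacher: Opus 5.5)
Your proposal is correct and is essentially the paper's argument: the paper reruns the proof of (b)$\Rightarrow$(a) in Theorem~\ref{theorem:derived Radu-Andre} without the final appeal to Kunz's theorem. That argument base-changes $F_{S/R}$ to a field over $x$, obtains flatness of the fiber's absolute Frobenius as a composite, deduces discreteness via Lemma~\ref{lemma:Frobenius-negative-cohomology-paper}, and gets flatness of $f$ via Lemma~\ref{lemma:avramov-foxby}. The only cosmetic difference is that you pass through $\bar\kappa$ and descend discreteness to $\kappa(x)$, whereas the paper uses finite extensions $\kappa/\kappa(x)$ and takes $\kappa=\kappa(x)$ directly. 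Your argument works verbatim for any field extension of $\kappa(x)$, so it covers either reading of ``geometric fiber''.
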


If $H^0(R)$ is not noetherian, then the previous statement need not be true. 
If $R=\FF_p[x^{1/p^{\infty}}]$ denotes the perfect polynomial ring in one variable, then the relative Frobenius of the projection $R\to \FF_p$ with kernel $(x^{1/p^{\infty}})$ is an isomorphism but the projection not flat.

\emergencystretch=1em
\printbibliography

\end{document}